\numberwithin{equation}{section}
\newcommand{\md}{\mathrm{d}}
\newcommand{\R}{\mathbb{R}}
\newcommand{\Rmnum}[1]{\uppercase\expandafter{\romannumeral#1}} 
\newcommand{\mybar}[1]{\overline{#1}}
\newcommand{\mytilde}[1]{\widetilde{#1}}
\newcommand{\bbZ}{\mathbb{Z}}
\newcommand{\calB}{\mathcal{B}}
\newcommand{\calE}{\mathcal{E}}
\newcommand{\calF}{\mathcal{F}}
\newcommand{\calH}{\mathcal{H}}
\newcommand{\calK}{\mathcal{K}}
\newcommand{\calN}{\mathcal{N}}
\newcommand{\calV}{\mathcal{V}}
\def\Xint#1{\mathchoice
{\XXint\displaystyle\textstyle{#1}}%
{\XXint\textstyle\scriptstyle{#1}}%
{\XXint\scriptstyle\scriptscriptstyle{#1}}%
{\XXint\scriptscriptstyle\scriptscriptstyle{#1}}%
\!\int}
\def\XXint#1#2#3{{\setbox0=\hbox{$#1{#2#3}{\int}$ }
\vcenter{\hbox{$#2#3$ }}\kern-.6\wd0}}
\def\dashint{\Xint-}
\newtheorem{mythm}{Theorem}[section]
\newtheorem{myprop}[mythm]{Proposition}
\newtheorem{mylem}[mythm]{Lemma}
\renewcommand{\thefootnote}{}
\begin{document}

\title{\Large{A direct proof of the cutoff Sobolev inequality on the Sierpi\'nski gasket}}
\author{Meng Yang}
\date{}

\maketitle

\abstract{We present a \emph{direct} proof of the cutoff Sobolev inequality on the Sierpi\'nski gasket, which has long been regarded as highly non-trivial in the context of heat kernel estimates.}

\footnote{\textsl{Date}: \today}
\footnote{\textsl{MSC2020}: 28A80}
\footnote{\textsl{Keywords}: cutoff Sobolev inequality, Sierpi\'nski gasket.}
\footnote{The author was very grateful to Eryan Hu for introducing this topic and for reading a preliminary draft, and to Alexander Grigor'yan for reading a preliminary draft and for raising a question about the essential ingredients for a proof on the Sierpi\'nski carpet.}

\renewcommand{\thefootnote}{\arabic{footnote}}
\setcounter{footnote}{0}

\section{Introduction}

Let us recall the following classical result. On a complete non-compact Riemannian manifold, it was independently discovered by Grigor’yan \cite{Gri92} and Saloff-Coste \cite{Sal92,Sal95} that the following two-sided Gaussian estimate of the heat kernel
\begin{equation*}\label{eqn_HK2}\tag*{HK(2)}
\frac{C_1}{V(x,\sqrt{t})}\exp\left(-C_2\frac{d(x,y)^2}{t}\right)\le p_t(x,y)\le\frac{C_3}{V(x,\sqrt{t})}\exp\left(-C_4\frac{d(x,y)^2}{t}\right),
\end{equation*}
where $d(x,y)$ is the geodesic distance, $V(x,r)=m(B(x,r))$ is the Riemannian measure of the open ball $B(x,r)$, is equivalent to the conjunction of the volume doubling condition and the Poincar\'e inequality. However, on many fractals, including the Sierpi\'nski gasket and the Sierpi\'nski carpet, the heat kernel has the following two-sided sub-Gaussian estimate
\begin{align*}
&\frac{C_1}{V(x,t^{1/{d_w}})}\exp\left(-C_2\left(\frac{d(x,y)}{t^{1/d_w}}\right)^{\frac{d_w}{d_w-1}}\right)\nonumber\\
&\le p_t(x,y)\le\frac{C_3}{V(x,t^{1/d_w})}\exp\left(-C_4\left(\frac{d(x,y)}{t^{1/d_w}}\right)^{\frac{d_w}{d_w-1}}\right),\label{eqn_HKdw}\tag*{HK($d_w$)}
\end{align*}
where $d_w$ is a new parameter called the walk dimension, which is always strictly greater than 2 on fractals. For $d_w=2$, \ref{eqn_HKdw} is exactly \ref{eqn_HK2}.

For general $d_w\ge2$, Barlow and Bass \cite{BB04} introduced the well-known cutoff Sobolev inequality and proved that \ref{eqn_HKdw} is equivalent to the conjunction of the volume doubling condition, the Poincar\'e inequality and the cutoff Sobolev inequality. The cutoff Sobolev inequality was later extended to more general settings, see \cite{BBK06,AB15,GHL15,CKW20,CKW21,GHH18}, and the references therein.

Roughly speaking, the cutoff Sobolev inequality provides a family of cutoff functions with controlled energy behavior, but it is significantly stronger than the capacity condition. While the capacity condition is relatively easy to verify on many fractals, it has long been believed that establishing the cutoff Sobolev inequality is complicated and difficult. This is partly due to the fact that existing proofs typically construct the desired cutoff functions using Green's functions with certain estimates. To the best of the author's knowledge, there has been no \emph{direct} verification of the cutoff Sobolev inequality on any fractal.

The main motivation of this paper is to provide a direct proof on the Sierpi\'nski gasket (see Figure \ref{fig_gasket}). The Sierpi\'nski gasket is a prototypical finitely ramified fractal and a standard example of a p.c.f. self-similar set, as described in \cite{Kig01book}, where the so-called compatible condition holds. Our proof would be transparent, and similarly transparent proofs can be carried out for a broad class of fractals satisfying the compatible condition. The technique would rely essentially on the strongly recurrent property of the fractal. Indeed, in \cite{Yan25c}, we applied the same approach to establish a $p$-version of the cutoff Sobolev inequality associated with $p$-energies on metric measure spaces under the so-called slow volume regular condition, which extends the strongly recurrent condition from the case $p=2$ to general $p>1$.

\begin{figure}[ht]
\centering
\includegraphics[width=0.5\linewidth]{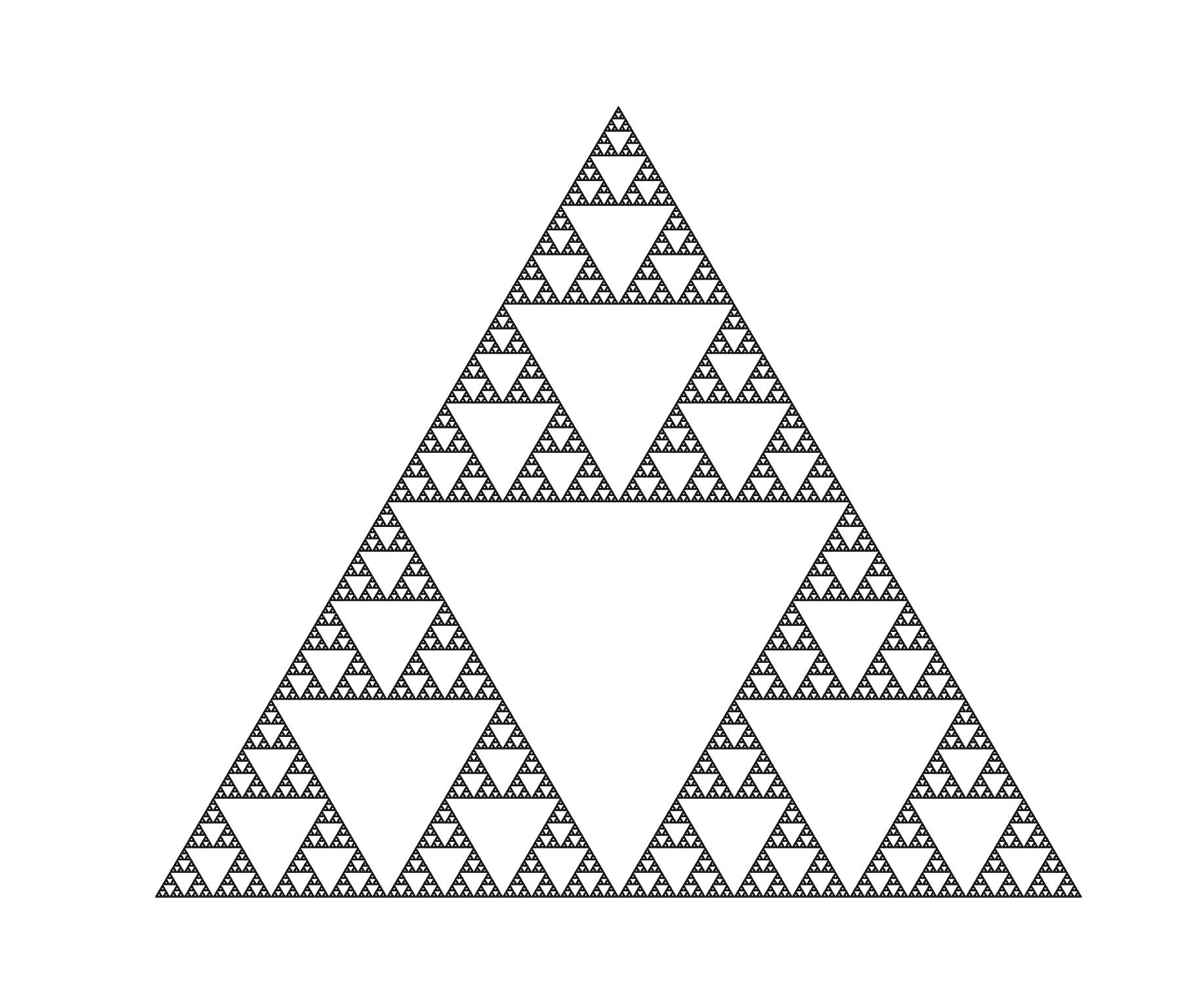}
\caption{The Sierpi\'nski gasket}\label{fig_gasket}
\end{figure}

We introduce the following notation. The letters $C$, $C_1$, $C_2$, $C_A$, $C_B$ will always refer to some positive constants and may change at each occurrence. We use $\#A$ to denote the cardinality of a set $A$. For $a\in\R$, $A\subseteq\R^d$, denote $aA=\{ax:x\in A\}$.

We now present the formal statement of our result. Let $(X,d,m,\calE,\calF)$ be a metric measure Dirichlet space, that is, $(X,d)$ is a locally compact separable metric space, $m$ is a positive Radon measure on $X$ with full support, $(\calE,\calF)$ is a strongly local regular Dirichlet form on $L^2(X;m)$. For any $x\in X$, for any $r\in(0,+\infty)$, denote $B(x,r)=\{y\in X:d(x,y)<r\}$, denote $V(x,r)=m(B(x,r))$. If $B=B(x,r)$, then we denote $\delta B=B(x,\delta r)$ for any $\delta\in(0,+\infty)$. Let $\calB(X)$ be the family of all Borel measurable subsets of $X$. Denote $\dashint_A=\frac{1}{m(A)}\int_A$ for any measurable set $A$ with $m(A)\in(0,+\infty)$. Let $C(X)$ denote the family of all real-valued continuous functions on $X$ and let $C_c(X)$ denote the family of all real-valued continuous functions on $X$ with compact support. Let $\Gamma$ be the energy measure corresponding to the Dirichlet form $(\calE,\calF)$ on $L^2(X;m)$. We refer to \cite{FOT11} for related results about Dirichlet forms.

We say that the volume doubling condition \ref{eqn_VD} holds if there exists $C_{VD}\in(0,+\infty)$ such that
\begin{equation*}\label{eqn_VD}\tag*{VD}
V(x,2r)\le C_{VD}V(x,r)\text{ for any }x\in X,r\in(0,+\infty).
\end{equation*}

Let $U,V$ be two open subsets of $X$ satisfying $U\subseteq\mybar{U}\subseteq V$. We say that $\phi\in\calF$ is a cutoff function for $U\subseteq V$ if $0\le\phi\le1$ $m$-a.e., $\phi=1$ $m$-a.e. in an open neighborhood of $\mybar{U}$ and $\mathrm{supp}(\phi)\subseteq V$, where $\mathrm{supp}(f)$ refers to the support of the measure of $|f|\md m$ for any given function $f$.

We say that the cutoff Sobolev inequality \ref{eqn_CSPsi} holds if there exists $C_S\in(0,+\infty)$ such that for any $x\in X$, for any $R$, $r\in(0,+\infty)$, there exists a cutoff function $\phi\in\calF$ for $B(x,R)\subseteq B(x,R+r)$ such that for any $f\in\calF$, we have
\begin{align*}
&\int_{B(x,R+r)\backslash\mybar{B(x,R)}}|\mytilde{f}|^2\md\Gamma(\phi,\phi)\\
&\le\frac{1}{8}\int_{B(x,R+r)\backslash\mybar{B(x,R)}}|\mytilde{\phi}|^2\md\Gamma(f,f)+\frac{C_S}{\Psi(r)}\int_{B(x,R+r)\backslash\mybar{B(x,R)}}|f|^2\md m,\label{eqn_CSPsi}\tag*{CS($\Psi$)}
\end{align*}
where $\mytilde{f}$, $\mytilde{\phi}$ are quasi-continuous modifications of $f$, $\phi$, respectively, see \cite[Theorem 2.1.3]{FOT11}. For $d_w\in(0,+\infty)$, we say that the cutoff Sobolev inequality \hypertarget{eqn_CSdw}{CS($d_w$)} holds if \ref{eqn_CSPsi} holds with $\Psi:r\mapsto r^{d_w}$.

A simplified version of \ref{eqn_CSPsi} was introduced in \cite[Definition 6.1]{Mur24a} as follows. We say that the simplified cutoff Sobolev inequality \ref{eqn_CSSPsi} holds if there exist $C_1$, $C_2\in(0,+\infty)$, $A\in(1,+\infty)$ such that for any $x\in X$, for any $r\in(0,+\infty)$, there exists a cutoff function $\phi\in\calF$ for $B(x,r)\subseteq B(x,Ar)$ such that for any $f\in\calF$, we have
\begin{equation*}\label{eqn_CSSPsi}\tag*{CSS($\Psi$)}
\int_{B(x,Ar)}|\mytilde{f}|^2\md\Gamma(\phi,\phi)\le C_1\int_{B(x,Ar)} \md\Gamma(f,f)+\frac{C_2}{\Psi(r)}\int_{B(x,Ar)}|f|^2\md m,
\end{equation*}
where $\mytilde{f}$ is a quasi-continuous modification of $f$. For $d_w\in(0,+\infty)$, we say that the simplified cutoff Sobolev inequality \hypertarget{eqn_CSSdw}{CSS($d_w$)} holds if \ref{eqn_CSSPsi} holds with $\Psi:r\mapsto r^{d_w}$.

It is obvious that \ref{eqn_CSPsi} implies \ref{eqn_CSSPsi}. Using the self-improvement property of cutoff Sobolev inequalities, the converse also holds as follows.

\begin{mylem}[{\cite[Lemma 6.2]{Mur24a}}]
Assume that \ref{eqn_VD} and \ref{eqn_CSSPsi} hold. Then \ref{eqn_CSPsi} holds.
\end{mylem}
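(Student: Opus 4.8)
The plan is to deduce \ref{eqn_CSPsi} from \ref{eqn_CSSPsi} by two successive \emph{self-improvements}. The first is geometric: I pass from the fixed-ratio balls $B(x,r)\subseteq B(x,Ar)$ of \ref{eqn_CSSPsi} to arbitrary nested balls $B(x,R)\subseteq B(x,R+r)$, and localise all integrals to the annulus $U:=B(x,R+r)\backslash\mybar{B(x,R)}$. The second is analytic: I upgrade the crude energy term $C_1\int\md\Gamma(f,f)$ into the weighted term $\frac18\int|\mytilde{\phi}|^2\md\Gamma(f,f)$. After reducing to $f\in\calF\cap C_c(X)$, the two main algebraic tools are the Leibniz rule $\md\Gamma(\phi f,\cdot)=\mytilde{\phi}\,\md\Gamma(f,\cdot)+\mytilde{f}\,\md\Gamma(\phi,\cdot)$ and the Cauchy--Schwarz inequality for the positive semidefinite energy measure, which together yield the identity
\[
\int|\mytilde{f}|^2\,\md\Gamma(\phi,\phi)=\int\md\Gamma(\phi f,\phi f)-2\int\mytilde{\phi}\,\mytilde{f}\,\md\Gamma(\phi,f)-\int|\mytilde{\phi}|^2\,\md\Gamma(f,f).
\]

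For the geometric step, the case $r\ge(A-1)R$ is immediate: the \ref{eqn_CSSPsi} cutoff for $B(x,R)\subseteq B(x,AR)$ is already supported in $B(x,R+r)$. When $r<(A-1)R$ the annulus is thin, and I would cover its transition shell by balls of radius comparable to $r$; by \ref{eqn_VD} such a cover can be chosen with bounded overlap multiplicity, and gluing the corresponding \ref{eqn_CSSPsi} cutoffs (via normal contractions such as minima) produces a cutoff $\phi$ for $B(x,R)\subseteq B(x,R+r)$ whose energy measure is supported in $\mybar{U}$. The bounded overlap together with the scaling $\Psi(r)=r^{d_w}$ lets me sum the local energy and $L^2$ contributions to obtain a \emph{weak annular} inequality, valid for any nested pair and in particular for thin annuli,
\[
\int_U|\mytilde{f}|^2\,\md\Gamma(\phi,\phi)\le C_1\int_U\md\Gamma(f,f)+\frac{C_2}{\Psi(r)}\int_U|f|^2\,\md m.
\]

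The analytic self-improvement is the heart of the argument. I would subdivide the width-$r$ shell into $N$ essentially disjoint concentric sub-shells and let $\psi_j$ be a weak-annular cutoff, furnished by the previous step, that descends from $1$ to $0$ across the $j$-th sub-shell (numbered from inside out), so that $\phi:=\frac1N\sum_{j=1}^N\psi_j$ is again a cutoff for $B(x,R)\subseteq B(x,R+r)$. Since the measures $\Gamma(\psi_j,\psi_j)$ are carried by disjoint sub-shells, strong locality kills the cross terms and $\Gamma(\phi,\phi)=\frac1{N^2}\sum_j\Gamma(\psi_j,\psi_j)$; this cuts the energy constant by $N^{-2}$, at the price of an $L^2$ constant growing like $N^{d_w-2}$ (through $\Psi(r/N)=N^{-d_w}\Psi(r)$) but remaining scale-invariant once $N$ is fixed. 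Crucially, on the $j$-th sub-shell the $N-j$ cutoffs $\psi_i$ with $i>j$ are identically $1$, whence $\mytilde{\phi}\ge(N-j)/N$ there; this pointwise lower bound turns the unweighted bound $\frac{C_1}{N^2}\int\md\Gamma(f,f)$ on that sub-shell into $\frac{C_1}{(N-j)^2}\int|\mytilde{\phi}|^2\md\Gamma(f,f)$, which is $\le\frac18\int|\mytilde{\phi}|^2\md\Gamma(f,f)$ as soon as $(N-j)^2\ge8C_1$. Summing over these sub-shells and using their disjointness produces exactly the target coefficient $\frac18$ on $\int_U|\mytilde{\phi}|^2\md\Gamma(f,f)$.

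The main obstacle is the outermost band of $O(\sqrt{C_1})$ sub-shells, where $\mytilde{\phi}$ degenerates to $0$ and its square can no longer absorb the unweighted energy. This is where the construction must be iterated: one first replaces each sub-shell cutoff by its own small-constant version, so that the bad band shrinks to a single boundary layer, and then treats the residual thin outer band by repeating the whole scheme on a geometrically decreasing sequence of annuli, the sub-Gaussian scaling $\Psi(r)=r^{d_w}$ ensuring that the accumulated energy and $L^2$ contributions form convergent series with scale-invariant constants. Once this absorption is carried out, the Leibniz identity above together with Cauchy--Schwarz and the elementary bound $2ab\le\frac12a^2+2b^2$ finishes the passage to \ref{eqn_CSPsi}. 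A final routine point, which I would handle with the standard quasi-continuity theory of \cite{FOT11}, is to justify all manipulations with the quasi-continuous representatives $\mytilde{f},\mytilde{\phi}$ and to confirm that the glued functions are genuine cutoff functions lying in $\calF$.
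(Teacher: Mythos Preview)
The paper does not give its own proof of this lemma; it is quoted verbatim from \cite[Lemma~6.2]{Mur24a}. Your two-stage outline --- a covering/gluing argument under \ref{eqn_VD} to pass from the fixed-ratio balls of \ref{eqn_CSSPsi} to an annular inequality, followed by a layered averaging $\phi=\frac1N\sum_j\psi_j$ over $N$ concentric sub-shells, using strong locality to decouple the $\Gamma(\psi_j,\psi_j)$ and the pointwise lower bound $\mytilde{\phi}\ge(N-j)/N$ to convert the unweighted energy into the $|\mytilde{\phi}|^2$-weighted one --- is exactly the self-improvement scheme of the cited reference and its antecedents \cite{AB15,GHL15}. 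So the strategy is correct and matches the source.

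Two minor comments. First, the Leibniz identity you display at the beginning plays no role in the argument: the layered construction already produces the weighted term $\int|\mytilde{\phi}|^2\,\md\Gamma(f,f)$ directly, and no expansion of $\Gamma(\phi f,\phi f)$ is required at the end. Second, your handling of the outermost $O(\sqrt{C_1})$ sub-shells by iterating on geometrically thinner annuli is correct in spirit but heavier than necessary. The cleaner device, used in \cite{Mur24a,GHL15}, is to replace the uniform average by a geometric one, $\phi=\bigl(\sum_{j}q^{j}\psi_j\bigr)\big/\bigl(\sum_{j}q^{j}\bigr)$ for a suitable $q\in(0,1)$; then on every sub-shell $\mytilde{\phi}$ is comparable to its own increment, so no sub-shell is degenerate and the iteration is avoided entirely.
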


Our result is a direct proof of the following theorem.

\begin{mythm}\label{thm_main}
The simplified cutoff Sobolev inequality \hyperlink{eqn_CSSdw}{CSS($d_w$)} holds on the Sierpi\'nski gasket, where $d_w=\frac{\log5}{\log2}$ is the walk dimension.
\end{mythm}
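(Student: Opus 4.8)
The plan is to exploit the finitely ramified self-similar cell structure of the gasket $K$ together with its strongly recurrent resistance estimates, reducing the entire inequality to one clean estimate on a single cell. Recall that $K$ is the attractor of three similitudes $F_1,F_2,F_3$ of ratio $1/2$, that the self-similar measure satisfies $m(K_w)=3^{-n}$ for a word $w$ of length $n$ (writing $K_w=F_w(K)$ with $F_w=F_{w_1}\circ\cdots\circ F_{w_n}$), and that the standard Dirichlet form obeys the energy renormalization $\calE(f)=\tfrac{5}{3}\sum_{i=1}^{3}\calE(f\circ F_i)$, so that a level-$n$ cell contributes $\Gamma(f,f)(K_w)=(5/3)^n\calE(f\circ F_w)$. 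Given $x$ and $r$, I would fix the scale $n$ with $2^{-n}\asymp r$. Since $K$ is Ahlfors regular of dimension $d_f=\log 3/\log 2$, the ball $B(x,r)$ meets only boundedly many level-$n$ cells, with a universal bound $M$ that also controls any fixed collar of adjacent cells; this uses only \ref{eqn_VD}.

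Next I would construct $\phi$ cell by cell. Let $\Lambda_0$ be the level-$n$ cells meeting $\mybar{B(x,R)}$ together with those sharing a junction point with them, let $\Lambda_1$ be one further layer of adjacent cells, and set $\phi\equiv1$ on $\bigcup_{w\in\Lambda_0}K_w$, $\phi\equiv0$ off $\bigcup_{w\in\Lambda_1}K_w$, and on each transition cell $K_w$ with $w\in\Lambda_1\setminus\Lambda_0$ let $\phi$ be the harmonic function with corner values in $\{0,1\}$ determined by adjacency to the inner region or the exterior. By finite ramification these corner prescriptions are consistent at junctions, so $\phi$ is continuous, piecewise harmonic, and of finite energy, hence $\phi\in\calF$; one checks it is a genuine cutoff function for $B(x,R)\subseteq B(x,Ar)$ with a universal $A$. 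The energy measure $\Gamma(\phi,\phi)$ is supported on the transition cells, of which there are at most $M$, and on each, $\Gamma(\phi,\phi)(K_w)=(5/3)^n\calE(\phi\circ F_w)\le c_0(5/3)^n$ since $\phi\circ F_w$ is harmonic with $\{0,1\}$ boundary data.

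The heart of the argument is the estimate on a single transition cell, and this is where the strongly recurrent property enters. For $f\in\calF$ and any $y,z\in K_w$ one has the resistance bound $|f(y)-f(z)|^2\le R(y,z)\,\Gamma(f,f)(K_w)$, and the resistance diameter of a level-$n$ cell scales as $(3/5)^n$, so $\mathrm{osc}_{K_w}(f)^2\le C(3/5)^n\,\Gamma(f,f)(K_w)$. Writing $\bar f=\dashint_{K_w}f\,\md m$ and splitting $|\mytilde f|^2\le 2|\mytilde f-\bar f|^2+2|\bar f|^2$, I obtain
\[
\int_{K_w}|\mytilde f|^2\,\md\Gamma(\phi,\phi)\le 2\,\mathrm{osc}_{K_w}(f)^2\,\Gamma(\phi,\phi)(K_w)+2|\bar f|^2\,\Gamma(\phi,\phi)(K_w).
\]
In the first term the factors cancel exactly, $(3/5)^n(5/3)^n=1$, giving $\le C_1\,\Gamma(f,f)(K_w)$; in the second, $|\bar f|^2\le 3^n\int_{K_w}|f|^2\,\md m$ combines with $(5/3)^n$ to produce $5^n\asymp r^{-d_w}$, giving $\le \tfrac{C_2}{r^{d_w}}\int_{K_w}|f|^2\,\md m$. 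Summing over the at most $M$ transition cells, whose interiors are disjoint and whose junction points carry no $m$- or $\Gamma(f,f)$-mass, yields \hyperlink{eqn_CSSdw}{CSS($d_w$)}.

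The main obstacle, and the crux of the whole proof, is the single-cell estimate together with the precise cancellation of the resistance decay $(3/5)^n$ against the energy growth $(5/3)^n$: this is exactly the arithmetic consequence of $d_w=\log 5/\log 2$ and $d_f=\log 3/\log 2$, and it is the oscillation-by-energy bound, available only because the gasket is strongly recurrent ($d_w>d_f$, so $\calF\hookrightarrow C(K)$ with quantitative control), that makes it work. Everything else—the bounded count of cells meeting a ball, the verification that $\phi$ is an admissible cutoff, and the fact that $\Gamma(\phi,\phi)$ does not charge junction points—is bounded-geometry bookkeeping, which is precisely why the argument is \emph{direct} and avoids any appeal to Green's functions.
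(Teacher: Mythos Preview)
Your proposal is correct and follows essentially the same route as the paper: a piecewise-harmonic cutoff on level-$n$ cells, the Morrey--Sobolev/resistance oscillation bound $\mathrm{osc}_{K_w}(f)^2\lesssim(3/5)^n\Gamma(f,f)(K_w)$, the split $|f|^2\le 2|f-\bar f|^2+2|\bar f|^2$, and the cancellation $(3/5)^n\cdot(5/3)^n=1$. The only cosmetic difference is that the paper builds $\phi$ as a maximum of single-cell cutoffs $\phi_{\mytilde K}$ (one per $n$-cell in $\calN(K)$), whereas you prescribe corner values directly and harmonically extend; the resulting estimates are the same.
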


\section{Proof}\label{sec_gasket}

For simplicity, we give the proof on the unbounded Sierpi\'nski gasket. In $\R^2$, let
$$p_1=\left(0,0\right),p_2=\left(1,0\right),p_3=\left(\frac{1}{2},\frac{\sqrt{3}}{2}\right),$$
and
$$g_i(x)=\frac{1}{2}\left(x-p_i\right)+p_i,x\in\R^2,i=1,2,3.$$
Then the Sierpi\'nski gasket is the unique non-empty compact set $\calK$ in $\R^2$ satisfying that $\calK=\cup_{i=1}^3g_i(\calK)$, see Figure \ref{fig_gasket}. Let $d_h=\frac{\log3}{\log2}$, then the Hausdorff dimension of $\calK$ is $d_h$ and the $d_h$-dimensional Hausdorff measure $\calH^{d_h}(\calK)\in(0,+\infty)$, see also \cite[Theorem 1.5.7, Proposition 1.5.8]{Kig01book}.

Let $X=\cup_{n=0}^{+\infty}2^n\calK$ be the unbounded Sierpi\'nski gasket, where $\{2^n\calK\}_{n\ge0}$ is an increasing sequence of subsets of $\R^2$. For any $n\in\bbZ$, we say that $K\subseteq X$ is an $n$-cell if $K$ is a translation of $2^n\calK$. Let $d$ be the Euclidean metric on $X$, then each $n$-cell has diameter $2^n$. Let $m$ be the unique positive Radon measure on $X$ such that each $n$-cell has measure $3^n$, indeed, $m=\frac{1}{\calH^{d_h}(\calK)}\calH^{d_h}$ on $X$. It is obvious that $(X,d,m)$ is an unbounded metric measure space.

For any $n\in\bbZ$, for any $n$-cell $K$, let $\calN(K)$ denote the $n$-cell neighborhood of $K$ in $X$, defined by
$$\calN(K)=\bigcup_{\mytilde{K}:n\text{-cell},\mytilde{K}\cap K\ne\emptyset}\mytilde{K},$$
then $\calN(K)\supseteq K$ and
\begin{equation*}
\#\{\mytilde{K}:n\text{-cell},\mytilde{K}\subseteq\calN(K)\}=
\begin{cases}
3&\text{if }K=2^n\calK,\\
4&\text{otherwise}.
\end{cases}
\end{equation*}

Let $V_0=\{p_1,p_2,p_3\}$ and $V_{n+1}=\cup_{i=1}^3g_i(V_n)$ for any $n\ge0$, then $\{2^nV_n\}_{n\ge0}$ is an increasing sequence of finites subsets of $\R^2$. Let $\calV_0=\cup_{n\ge0}2^nV_n$ and $\calV_n=2^n\calV_0$ for any $n\in\bbZ$. For any $n\in\bbZ$, for each $n$-cell $K$, the closed convex hull of $K$ in $\R^2$ is an equilateral triangle. The set of its three vertices is exactly $K\cap\calV_n$, and we refer to these three vertices as the boundary points of $K$.

Let
\begin{align*}
&\calE(u,u)=\lim_{n\to+\infty}\left(\frac{5}{3}\right)^n\sum_{p,q\in\calV_{-n},d(p,q)=2^{-n}}(u(p)-u(q))^2,\\
&\calF=\left\{u\in L^2(X;m)\cap C(X):\lim_{n\to+\infty}\left(\frac{5}{3}\right)^n\sum_{p,q\in\calV_{-n},d(p,q)=2^{-n}}(u(p)-u(q))^2<+\infty\right\}.
\end{align*}
By the standard theory in \cite{Kig01book,Str06book}, the above sequence $\left(\frac{5}{3}\right)^n\sum_{p,q:\ldots}(u(p)-u(q))^2$ is monotone increasing in $n$, and $(\calE,\calF)$ is a strongly local regular Dirichlet form on $L^2(X;m)$.

Let $d_w=\frac{\log5}{\log2}$. We have the following Morrey-Sobolev inequality. The proof is standard and relies on the geometric properties of the Sierpi\'nski gasket, and is therefore omitted.

\begin{mylem}[Morrey-Sobolev inequality, {\cite[LEMMA 2.4]{FH99}}]\label{lem_SG_MS}
There exists $C_{MS}\in(0,+\infty)$ such that for any $n\in\bbZ$, for any $n$-cell $K$, for any $x,y\in\calN(K)$, for any $f\in\calF$, we have
$$|f(x)-f(y)|^2\le C_{MS}d(x,y)^{d_w-d_h}\int_{\calN(K)}\md\Gamma(f,f),$$
where
$$\int_{\calN(K)}\md\Gamma(f,f)=\lim_{m\to+\infty}\left(\frac{5}{3}\right)^{m}\sum_{p,q\in\calN(K)\cap\calV_{-m},d(p,q)=2^{-m}}(f(p)-f(q))^2.$$
\end{mylem}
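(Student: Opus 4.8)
The plan is to reduce everything to one scale-invariant estimate on a single cell and then sum a geometric series across scales; the convergence of that series is exactly the point where the strong recurrence $d_w>d_h$ (equivalently $5/3>1$) is used.

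First I would record the scaling of the energy under the contractions $g_i$. If $K$ is an $n$-cell and $\tilde f(z)=f(2^nz+v)$ is the pullback of $f$ to $\calK$ under the affine bijection carrying $\calK$ onto $K$, then unwinding the definition of $\int_K\md\Gamma(f,f)$ as a renormalized limit of discrete energies gives
\[
\int_\calK\md\Gamma(\tilde f,\tilde f)=\left(\tfrac{5}{3}\right)^{n}\int_K\md\Gamma(f,f).
\]
Combined with the effective-resistance characterization on the unit gasket, namely $|g(p)-g(q)|^2\le\rho_0\,\calE(g,g)$ for two boundary points $p,q$, where $\rho_0<+\infty$ is the resistance between two corners of $\calK$, this yields the single-cell bound: for every $n$-cell $K$ and any two boundary points $p,q$ of $K$,
\[
|f(p)-f(q)|^2\le\rho_0\left(\tfrac{5}{3}\right)^{n}\int_K\md\Gamma(f,f).
\]
I would establish $\rho_0<+\infty$ and reproduce the renormalization factor $5/3$ by the standard $Y$–$\Delta$ network reduction on $V_0$.

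Next I would upgrade this boundary estimate to a full oscillation bound over a cell. Fix an $n$-cell $K_0$ and a point $x\in K_0$, and let $K_0\supseteq K_1\supseteq\cdots$ be the nested sequence of $(n-j)$-subcells containing $x$, chosen so that $K_j$ and $K_{j+1}$ share a common boundary point $v_j$. Since $f\in\calF\subseteq C(X)$, the single-cell bound forces $f(v_j)\to f(x)$, and telescoping gives
\[
|f(x)-f(v_0)|\le\sum_{j\ge0}|f(v_{j+1})-f(v_j)|\le\sqrt{\rho_0\int_{K_0}\md\Gamma(f,f)}\;\left(\tfrac{5}{3}\right)^{n/2}\sum_{j\ge0}\left(\tfrac{3}{5}\right)^{j/2}.
\]
The geometric series converges precisely because $3/5<1$, i.e. $d_w>d_h$, and squaring yields $|f(x)-f(v_0)|^2\le C(5/3)^n\int_{K_0}\md\Gamma(f,f)$; the same bound then controls the oscillation of $f$ over all of $K_0$.

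Finally I would localize at the scale of $d(x,y)$. Given $x,y\in\calN(K)$, I would choose $\ell$ with $2^{\ell}\le d(x,y)<2^{\ell+1}$, so $\ell$ does not exceed the level of $K$, pick $\ell$-cells $K_x\ni x$ and $K_y\ni y$ lying inside $\calN(K)$ joined by a uniformly bounded chain of $\ell$-cells, and split $|f(x)-f(y)|$ through intermediate boundary points, applying the oscillation bound to $K_x,K_y$ and the single-cell bound along the connecting chain. Since the finitely many cells involved are distinct subcells of $\calN(K)$, additivity of the energy measure bounds the sum of their energies by $\int_{\calN(K)}\md\Gamma(f,f)$, and using $(5/3)^\ell=(2^\ell)^{d_w-d_h}\asymp d(x,y)^{d_w-d_h}$ (because $2^{d_w}=5$ and $2^{d_h}=3$) I would obtain the claimed inequality. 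The main obstacle is the single-cell estimate with the correct $(5/3)^n$ exponent, which requires verifying the finiteness of $\rho_0$ and the exact energy-scaling factor through the self-similar network computation, since everything downstream is a routine geometric summation once this scale-invariant input is secured; the secondary but genuinely necessary point is the geometric bookkeeping guaranteeing that $x$ and $y$ are joined by $O(1)$ cells of level $\ell$ inside $\calN(K)$, so that the energies aggregate with a scale-independent constant.
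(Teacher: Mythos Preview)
Your outline is correct and is precisely the standard argument the paper has in mind: the paper does not give its own proof of Lemma~\ref{lem_SG_MS}, stating only that it ``is standard and relies on the geometric properties of the Sierpi\'nski gasket'' and citing \cite[Lemma 2.4]{FH99}. Your resistance-plus-chaining strategy---single-cell scaling $|f(p)-f(q)|^2\le\rho_0(5/3)^n\int_K\md\Gamma(f,f)$, telescoping along a nested tower to reach interior points, and linking $x$ to $y$ by $O(1)$ cells at scale $2^\ell\asymp d(x,y)$---is exactly that standard proof, so there is nothing to contrast.

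Two small points worth tightening when you write it out. First, your assertion that $\ell$ ``does not exceed the level of $K$'' is not literally true: $\mathrm{diam}\,\calN(K)$ can be about $3\cdot 2^n$, so $\ell$ may equal $n$ or $n+1$; handle those top scales separately by applying the oscillation bound directly on $\calN(K)$. Second, for the chain step you do not actually need the cells to be \emph{distinct} to invoke additivity---since there are only $O(1)$ of them, simply bounding each $\int_{K'}\md\Gamma(f,f)$ by $\int_{\calN(K)}\md\Gamma(f,f)$ already gives a uniform constant. With these cosmetic fixes the argument goes through as written.
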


We have the following result concerning cutoff functions on cells.

\begin{mylem}\label{lem_SG_cell}
For any $n\in\bbZ$, for any $n$-cell $K$, there exists $\phi_K\in\calF\cap C_c(X)$ with $0\le\phi_K\le1$ in $X$, $\phi_K=1$ on $K$, $\phi_K=0$ on $X\backslash \calN(K)$ such that for any $f\in\calF$, we have
$$\int_{\calN(K)}|f|^2\md\Gamma(\phi_K,\phi_K)\le\frac{200C_{MS}}{3}\int_{\calN(K)}\md\Gamma(f,f)+24 \left(\frac{3}{5}\right)^n\dashint_{\calN(K)}|f|^2\md m,$$
where $C_{MS}$ is the constant in Lemma \ref{lem_SG_MS}.
\end{mylem}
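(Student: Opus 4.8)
The plan is to write $\phi_K$ down explicitly and then to estimate the weighted energy cell by cell, exploiting the fact that the H\"older exponent in Lemma \ref{lem_SG_MS} is tuned exactly to the energy scale of the cutoff function. First I would enumerate the neighbouring $n$-cells $K_1,\dots,K_j$ of $K$ (so $j\le3$, with $j=2$ precisely when $K=2^n\calK$), and denote by $v_i$ the unique boundary point shared by $K$ and $K_i$. I would then set $\phi_K=1$ on $K$, $\phi_K=0$ on $X\backslash\calN(K)$, and on each $K_i$ let $\phi_K$ be the harmonic function with boundary value $1$ at $v_i$ and $0$ at the remaining two boundary points of $K_i$. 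Since the prescribed values agree at every boundary point of $\calN(K)$, the function $\phi_K$ is continuous; the maximum principle gives $0\le\phi_K\le1$, and $\phi_K\in\calF\cap C_c(X)$ because it is piecewise harmonic with support in the compact set $\calN(K)$. As $\phi_K$ is constant on $K$ and on $X\backslash\calN(K)$, the measure $\Gamma(\phi_K,\phi_K)$ is carried by $\cup_{i=1}^jK_i$, and since energy measures on the gasket charge no single point one has $\Gamma(\phi_K,\phi_K)(\calN(K))=\sum_{i=1}^j\Gamma(\phi_K,\phi_K)(K_i)$. By the self-similar scaling of $\calE$ (each unit increase of the cell level scales the harmonic energy by $3/5$), the harmonic function with boundary data $(1,0,0)$ on an $n$-cell has energy $2(3/5)^n$, so $\Gamma(\phi_K,\phi_K)(K_i)=2(3/5)^n$.

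Next, for a fixed $f\in\calF$ (which is automatically continuous on $X$), I would estimate $\int_{K_i}|f|^2\,\md\Gamma(\phi_K,\phi_K)$ by splitting $|f|^2\le2|f-f(v_i)|^2+2|f(v_i)|^2$. For the first piece, observe that for $z\in K_i$ both $z$ and $v_i$ lie in $\calN(K)$ with $d(z,v_i)\le\mathrm{diam}\,K_i=2^n$; applying Lemma \ref{lem_SG_MS} \emph{with the cell $K$} (so that the energy on the right stays confined to $\calN(K)$) and using $d_w-d_h=\log_2(5/3)$, hence $(2^n)^{d_w-d_h}=(5/3)^n$, gives $\sup_{z\in K_i}|f(z)-f(v_i)|^2\le C_{MS}(5/3)^n\int_{\calN(K)}\md\Gamma(f,f)$. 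Integrating against $\Gamma(\phi_K,\phi_K)$ over $K_i$ and using the mass $2(3/5)^n$ cancels the powers of $2^n$ and leaves a contribution $\le4C_{MS}\int_{\calN(K)}\md\Gamma(f,f)$, while the second piece leaves the point value $2|f(v_i)|^2\cdot2(3/5)^n$.

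To dispose of $f(v_i)$ I would compare it with the average of $f$ over $\calN(K)$: for $y\in\calN(K)$ write $|f(v_i)|^2\le2|f(v_i)-f(y)|^2+2|f(y)|^2$, average in $y$, and bound the first term once more by Lemma \ref{lem_SG_MS} with the cell $K$, now via $\mathrm{diam}\,\calN(K)\le3\cdot2^n$, so that $(\mathrm{diam}\,\calN(K))^{d_w-d_h}\le3^{d_w-d_h}(5/3)^n$. The same cancellation of $(5/3)^n$ against the prefactor $(3/5)^n$ then yields $4(3/5)^n|f(v_i)|^2\le8C_{MS}3^{d_w-d_h}\int_{\calN(K)}\md\Gamma(f,f)+8(3/5)^n\dashint_{\calN(K)}|f|^2\,\md m$. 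Summing over the at most three neighbours, the measure term accumulates the coefficient $8\cdot3=24$ in front of $(3/5)^n\dashint_{\calN(K)}|f|^2\,\md m$, and the energy term accumulates $12C_{MS}+24C_{MS}3^{d_w-d_h}$, which one checks is at most $\frac{200}{3}C_{MS}$ because $3^{d_w-d_h}=3^{\log_2(5/3)}<\frac{41}{18}$. This is exactly the asserted inequality.

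The step I expect to be most delicate is the cellwise estimate, and in particular the choice to invoke Lemma \ref{lem_SG_MS} with the cell $K$ rather than with $K_i$: only with $K$ does the energy on the right-hand side remain supported inside $\calN(K)$, as the statement demands, whereas $\calN(K_i)$ protrudes beyond $\calN(K)$. The whole scheme closes only because the H\"older exponent $d_w-d_h=\log_2(5/3)$ of the Morrey--Sobolev inequality is exactly reciprocal to the energy scale $(3/5)^n$ of the cutoff function, so that every power of $2^n$ disappears and the constants become genuinely scale-invariant; this precise cancellation is the analytic face of the strong recurrence of the Sierpi\'nski gasket.
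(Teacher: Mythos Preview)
Your argument is correct and follows essentially the same strategy as the paper: build $\phi_K$ as the piecewise-harmonic cutoff, then control $|f|^2$ on $\calN(K)$ via the Morrey--Sobolev inequality so that the oscillation factor $(5/3)^n$ cancels against the cutoff energy $(3/5)^n$. The only cosmetic difference is that the paper anchors at a single point $x_0\in\calN(K)$ with $|f(x_0)|^2=\dashint_{\calN(K)}|f|^2\,\md m$ (so one application of Lemma~\ref{lem_SG_MS} and one global bound on $|f(p)|^2$ suffice), whereas you anchor cell-by-cell at the shared vertices $v_i$ and then compare $f(v_i)$ to the average, which costs a second invocation of Lemma~\ref{lem_SG_MS} but yields slightly sharper intermediate constants.
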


\begin{proof}
Without loss of generality, we consider the case $\#\{\mytilde{K}:n\text{-cell},\mytilde{K}\subseteq\calN(K)\}=4$. Write $\calN(K)=K\cup K_1\cup K_2\cup K_3$, let $x_1$, $x_2$, $x_3$ be the boundary points of $K$, and $x_i$, $y_i$, $z_i$ the boundary points of $K_i$ for $i=1,2,3$, see Figure \ref{fig_SG_nbhd}.

\begin{figure}[ht]
\centering
\begin{tikzpicture}[scale=3]

\coordinate (A) at (0,0);
\coordinate (B) at (2,0);
\coordinate (C) at (1,2*0.8660254037);

\coordinate (D) at ($(B)!1/2!(C)$);
\coordinate (E) at ($(A)!1/2!(C)$);
\coordinate (F) at ($(A)!1/2!(B)$);

\coordinate (AF) at ($(A)!1/2!(F)$);
\coordinate (AE) at ($(A)!1/2!(E)$);
\coordinate (EF) at ($(E)!1/2!(F)$);

\coordinate (BD) at ($(B)!1/2!(D)$);
\coordinate (BF) at ($(B)!1/2!(F)$);
\coordinate (DF) at ($(D)!1/2!(F)$);

\coordinate (CD) at ($(C)!1/2!(D)$);
\coordinate (CE) at ($(C)!1/2!(E)$);
\coordinate (DE) at ($(D)!1/2!(E)$);

\coordinate (DEE) at ($(DE)!1/2!(E)$);
\coordinate (K1) at ($(DEE)!1/3!(CE)$);

\coordinate (AEF) at ($(AE)!1/2!(EF)$);
\coordinate (K) at ($(AEF)!1/3!(E)$);

\coordinate (AAF) at ($(A)!1/2!(AF)$);
\coordinate (K2) at ($(AAF)!1/3!(AE)$);

\coordinate (AFF) at ($(AF)!1/2!(F)$);
\coordinate (K3) at ($(AFF)!1/3!(EF)$);

\draw (A) -- (B) -- (C) -- cycle;

\draw (D) -- (E) -- (F) -- cycle;

\draw (CD) -- (DE) -- (CE) -- cycle;
\draw (AF) -- (AE) -- (EF) -- cycle;
\draw (BD) -- (BF) -- (DF) -- cycle;

\node at (K) {$K$};
\node at (K1) {$K_1$};
\node at (K2) {$K_2$};
\node at (K3) {$K_3$};

\node[left] at (E) {$x_1$};
\node[left] at (CE) {$y_1$};
\node[left] at (AE) {$x_2$};
\node[below left] at (A) {$y_2$};
\node[below] at (DE) {$z_1$};
\node[right] at (EF) {$x_3$};
\node[below] at (AF) {$z_2=z_3$};
\node[below] at (F) {$y_3$};

\filldraw[black] (A) circle (0.02);
\filldraw[black] (AF) circle (0.02);
\filldraw[black] (F) circle (0.02);
\filldraw[black] (EF) circle (0.02);
\filldraw[black] (AE) circle (0.02);
\filldraw[black] (E) circle (0.02);
\filldraw[black] (DE) circle (0.02);
\filldraw[black] (CE) circle (0.02);

\end{tikzpicture}
\caption{$\calN(K)$}\label{fig_SG_nbhd}
\end{figure}

Let $\phi_K$ be given by $\phi_K=1$ on $K$, $\phi_K=0$ on $X\backslash\calN(K)$, and on each $K_i$, $\phi_K$ is the harmonic function on $K_i$ with $\phi_K(x_i)=1$ and $\phi_K(y_i)=\phi_K(z_i)=0$, which is given by the standard $\frac{2}{5}$-$\frac{2}{5}$-$\frac{1}{5}$-algorithm, see \cite{Kig01book,Str06book}. Then $\phi_K\in\calF\cap C_c(X)$ is well-defined and
\begin{align*}
&\calE(\phi_K,\phi_K)=\int_{\calN(K)}\Gamma(\phi_K,\phi_K)\\
&=\lim_{m\to+\infty}\left(\frac{5}{3}\right)^{m}\sum_{p,q\in \calN(K)\cap\calV_{-m},d(p,q)=2^{-m}}\left(\phi_K(p)-\phi_K(q)\right)^2\\
&=\sum_{i=1}^3\lim_{-n\le m\to+\infty}\left(\frac{5}{3}\right)^{m}\sum_{p,q\in K_i\cap\calV_{-m},d(p,q)=2^{-m}}\left(\phi_K(p)-\phi_K(q)\right)^2\\
&=\sum_{i=1}^3\left(\frac{5}{3}\right)^{-n}\sum_{p,q\in K_i\cap\calV_{n},d(p,q)=2^{n}}\left(\phi_K(p)-\phi_K(q)\right)^2=6\left(\frac{3}{5}\right)^n.
\end{align*}

For any $f\in\calF\subseteq C(X)$, there exists $x_0\in\calN(K)$ such that $|f(x_0)|^2=\dashint_{\calN(K)}|f|^2\md m$. By the definition of energy measures, we have
\begin{align*}
&\int_{\calN(K)}|f|^2\md\Gamma(\phi_K,\phi_K)\\
&=2\lim_{m\to+\infty}\left(\frac{5}{3}\right)^{m}\sum_{p,q\in\calN(K)\cap\calV_{-m},d(p,q)=2^{-m}}\left((\phi_Kf^2)(p)-(\phi_Kf^2)(q)\right)\left(\phi_K(p)-\phi_K(q)\right)\\
&\hspace{10pt}-\lim_{m\to+\infty}\left(\frac{5}{3}\right)^{m}\sum_{p,q\in\calN(K)\cap\calV_{-m},d(p,q)=2^{-m}}\left((\phi_K^2)(p)-(\phi_K^2)(q)\right)\left((f^2)(p)-(f^2)(q)\right)\\
&=\lim_{m\to+\infty}\left(\frac{5}{3}\right)^{m}\sum_{p,q\in\calN(K)\cap\calV_{-m},d(p,q)=2^{-m}}\left(f(p)^2+f(q)^2\right)\left(\phi_K(p)-\phi_K(q)\right)^2.
\end{align*}
For any $p\in\calN(K)$, by Lemma \ref{lem_SG_MS}, we have
$$|f(p)-f(x_0)|^2\le C_{MS}d(p,x_0)^{d_w-d_h}\int_{\calN(K)}\md\Gamma(f,f)\le C_{MS}2^{(d_w-d_h)(n+2)}\int_{\calN(K)}\md\Gamma(f,f),$$
hence
$$|f(p)|^2\le2|f(p)-f(x_0)|^2+2|f(x_0)|^2\le2 C_{MS}2^{(d_w-d_h)(n+2)}\int_{\calN(K)}\md\Gamma(f,f)+2\dashint_{\calN(K)}|f|^2\md m,$$
which gives
\begin{align*}
&\int_{\calN(K)}|f|^2\md\Gamma(\phi_K,\phi_K)\\
&\le2\left(2 C_{MS}2^{(d_w-d_h)(n+2)}\int_{\calN(K)}\md\Gamma(f,f)+2\dashint_{\calN(K)}|f|^2\md m\right)\\
&\hspace{10pt}\cdot\lim_{m\to+\infty}\left(\frac{5}{3}\right)^{m}\sum_{p,q\in\calN(K)\cap\calV_{-m},d(p,q)=2^{-m}}\left(\phi_K(p)-\phi_K(q)\right)^2\\
&=2\left(2 C_{MS}2^{(d_w-d_h)(n+2)}\int_{\calN(K)}\md\Gamma(f,f)+2\dashint_{\calN(K)}|f|^2\md m\right)\cdot6\left(\frac{3}{5}\right)^n\\
&=\frac{200C_{MS}}{3}\int_{\calN(K)}\md\Gamma(f,f)+24 \left(\frac{3}{5}\right)^n\dashint_{\calN(K)}|f|^2\md m.
\end{align*}
\end{proof}

We have \hyperlink{eqn_CSSdw}{CSS($d_w$)} holds as follows.

\begin{myprop}\label{prop_SG_ball}
For any ball $B(x_0,r)$, there exists $\phi\in\calF\cap C_c(X)$ with $0\le\phi\le1$ in $X$, $\phi=1$ in $B(x_0,r)$, $\phi=0$ on $X
\backslash B(x_0,8r)$ such that for any $f\in\calF$, we have
$$\int_{B(x_0,8r)}|f|^2\md\Gamma(\phi,\phi)\le\frac{800C_{MS}}{3}\int_{B(x_0,8r)}\md\Gamma(f,f)+\frac{96}{r^{d_w}}\int_{B(x_0,8r)}|f|^2\md m,$$
where $C_{MS}$ is the constant in Lemma \ref{lem_SG_MS}.
\end{myprop}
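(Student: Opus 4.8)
The plan is to fix the scale $n=\min\{k\in\bbZ: 2^k\ge r\}$, so that $2^{n-1}<r\le 2^n$, and to glue together the single-cell cutoffs furnished by Lemma \ref{lem_SG_cell}. Let $\calC=\{K: K\text{ is an }n\text{-cell},\,K\cap\overline{B(x_0,r)}\ne\emptyset\}$, a finite family, and set
$$\phi=\max_{K\in\calC}\phi_K,$$
where each $\phi_K$ is the cutoff from Lemma \ref{lem_SG_cell}. First I would check the structural properties. Since $\calF$ is a lattice and $\calC$ is finite, $\phi\in\calF\cap C_c(X)$ with $0\le\phi\le1$. Every point of $B(x_0,r)$ lies in some $n$-cell, which therefore belongs to $\calC$, and since $\phi_K\equiv1$ on $K$ we get $\phi\equiv1$ on $B(x_0,r)$. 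For the support, each $K\in\calC$ satisfies $K\subseteq\overline{B(x_0,r+2^n)}$, so $\calN(K)\subseteq\overline{B(x_0,r+2\cdot2^n)}\subseteq\overline{B(x_0,5r)}$ using $2^n<2r$; as $\operatorname{supp}(\phi)\subseteq\bigcup_{K\in\calC}\calN(K)$, this gives $\operatorname{supp}(\phi)\subseteq B(x_0,8r)$.

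The heart of the argument is the energy estimate. Here I would invoke the standard formula for the energy measure of a maximum of finitely many functions in a strongly local Dirichlet form, which yields $\md\Gamma(\phi,\phi)\le\sum_{K\in\calC}\md\Gamma(\phi_K,\phi_K)$ as measures. Since each $\Gamma(\phi_K,\phi_K)$ is supported in $\calN(K)$ and $|f|^2\ge0$, this gives
$$\int_{B(x_0,8r)}|f|^2\md\Gamma(\phi,\phi)\le\sum_{K\in\calC}\int_{\calN(K)}|f|^2\md\Gamma(\phi_K,\phi_K).$$
Applying Lemma \ref{lem_SG_cell} to each summand converts the right-hand side into a sum of the two types of terms appearing there.

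The decisive point is a uniform overlap bound. For $z$ outside the countable vertex set (which is null both for $m$ and for the energy measure $\Gamma(f,f)$), $z$ lies in the interior of a unique $n$-cell $K'$, and $z\in\calN(K)$ forces $K'$ to meet $K$, so $K$ is one of the at most four cells meeting $K'$; hence $\sum_{K\in\calC}\mathbf{1}_{\calN(K)}\le4$ a.e. Consequently $\sum_{K\in\calC}\int_{\calN(K)}\md\Gamma(f,f)\le4\int_{B(x_0,8r)}\md\Gamma(f,f)$, and likewise $\sum_{K\in\calC}\int_{\calN(K)}|f|^2\md m\le4\int_{B(x_0,8r)}|f|^2\md m$. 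This overlap count of $4$ is exactly what produces the factor in front of $\tfrac{200C_{MS}}{3}$, giving the energy constant $4\cdot\tfrac{200C_{MS}}{3}=\tfrac{800C_{MS}}{3}$. For the lower-order term I would use $m(\calN(K))\ge3\cdot3^n$ to bound $24(3/5)^n\dashint_{\calN(K)}$ by $\tfrac{8}{5^n}\int_{\calN(K)}$; after the overlap bound and the identity $5^n=(2^n)^{d_w}\ge r^{d_w}$ (valid since $2^n\ge r$), the total lower-order constant is at most $\tfrac{32}{5^n}\le\tfrac{32}{r^{d_w}}\le\tfrac{96}{r^{d_w}}$.

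The main obstacle — indeed the only genuinely non-routine ingredient — is the combination of the energy-measure inequality $\md\Gamma(\max_K\phi_K,\max_K\phi_K)\le\sum_K\md\Gamma(\phi_K,\phi_K)$ with the observation that the neighborhoods $\{\calN(K)\}_{K\in\calC}$ overlap with multiplicity at most $4$; this is what transfers the single-cell bound of Lemma \ref{lem_SG_cell} to the ball with the stated constants. The remaining steps (the choice of scale $2^{n-1}<r\le2^n$, the containment $\operatorname{supp}(\phi)\subseteq B(x_0,8r)$, and the fact that $\phi\equiv1$ on $B(x_0,r)$) are elementary consequences of the cell geometry of the Sierpi\'nski gasket.
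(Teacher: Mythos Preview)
Your argument is correct and follows the same strategy as the paper: choose the dyadic scale $n$, define $\phi$ as the pointwise maximum of the single-cell cutoffs from Lemma~\ref{lem_SG_cell}, invoke the Markovian inequality $\md\Gamma(\max_K\phi_K,\max_K\phi_K)\le\sum_K\md\Gamma(\phi_K,\phi_K)$, and extract a combinatorial factor of $4$. The one variation is in how that factor arises: the paper fixes a single $n$-cell $K\ni x_0$, uses the geometric containment $B(x_0,r)\subseteq\calN(K)$, and takes the maximum over the at most four cells $\mytilde{K}\subseteq\calN(K)$, so $4$ is simply the number of summands; you instead take all $n$-cells meeting $\overline{B(x_0,r)}$ (which can exceed four) and recover $4$ via the pointwise overlap bound $\sum_{K\in\calC}\mathbf{1}_{\calN(K)}\le4$ off the vertex set. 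Your route sidesteps verifying $B(x_0,r)\subseteq\calN(K)$ and even yields a slightly better lower-order constant (your $32/r^{d_w}$ versus the paper's $96/r^{d_w}$, via $m(\calN(K))\ge3\cdot3^n$); the paper's route avoids the mild appeal to non-atomicity of $\Gamma(f,f)$ on the countable vertex set.
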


\begin{proof}
Let $n$ be the integer satisfying $2^{n-1}\le r<2^n$, there exists an $n$-cell $K$ such that $x_0\in K$, then $B(x_0,r)\subseteq\calN(K)$, see Figure \ref{fig_SG_nbhd}. For any $n$-cell $\mytilde{K}\subseteq\calN(K)$, by Lemma \ref{lem_SG_cell}, there exists $\phi_{\mytilde{K}}\in\calF\cap C_c(X)$ with $0\le\phi_{\mytilde{K}}\le1$ in $X$, $\phi_{\mytilde{K}}=1$ on $\mytilde{K}$, $\phi_{\mytilde{K}}=0$ on $X\backslash\calN(\mytilde{K})$ such that for any $f\in\calF$, we have
$$\int_{\calN(\mytilde{K})}|f|^2\md\Gamma(\phi_{\mytilde{K}},\phi_{\mytilde{K}})\le\frac{200C_{MS}}{3}\int_{\calN({\mytilde{K}})}\md\Gamma(f,f)+24 \left(\frac{3}{5}\right)^n\dashint_{\calN({\mytilde{K}})}|f|^2\md m.$$
Let
$$\phi=\max_{\mytilde{K}:n\text{-cell},\mytilde{K}\subseteq\calN(K)}\phi_{\mytilde{K}},$$
then $\phi\in\calF\cap C_c(X)$ satisfies $0\le\phi\le1$ in $X$, $\phi=1$ on $\calN(K)\supseteq B(x_0,r)$, $\phi=0$ on $X\backslash\cup_{{\mytilde{K}:n\text{-cell},\mytilde{K}\subseteq\calN(K)}}\calN(\mytilde{K})$. Since $\calN(\mytilde{K})\subseteq B(x_0,2^{n+2})\subseteq B(x_0,8r)$ for any $n$-cell $\mytilde{K}\subseteq\calN(K)$, we have $\phi=0$ on $X\backslash B(x_0,8r)\subseteq X\backslash\cup_{{\mytilde{K}:n\text{-cell},\mytilde{K}\subseteq\calN(K)}}\calN(\mytilde{K})$. By the Markovian property of Dirichlet forms, for any $A\in\calB(X)$, we have
\begin{equation}\label{eqn_Markov}
\Gamma(\phi,\phi)(A)\le\sum_{\mytilde{K}:n\text{-cell},\mytilde{K}\subseteq\calN(K)}\Gamma(\phi_{\mytilde{K}},\phi_{\mytilde{K}})(A).
\end{equation}
Hence for any $f\in\calF$, we have
\begin{align*}
&\int_{B(x_0,8r)}|f|^2\md\Gamma(\phi,\phi)\le\sum_{\mytilde{K}:n\text{-cell},\mytilde{K}\subseteq\calN(K)}\int_{B(x_0,8r)}|f|^2\md\Gamma(\phi_{\mytilde{K}},\phi_{\mytilde{K}})\\
&=\sum_{\mytilde{K}}\int_{\calN(\mytilde{K})}|f|^2\md\Gamma(\phi_{\mytilde{K}},\phi_{\mytilde{K}})\le\sum_{\mytilde{K}}\left(\frac{200C_{MS}}{3}\int_{\calN({\mytilde{K}})}\md\Gamma(f,f)+24 \left(\frac{3}{5}\right)^n\dashint_{\calN({\mytilde{K}})}|f|^2\md m\right)\\
&\overset{\calN(\mytilde{K})\subseteq B(x_0,8r)}{\underset{m(\calN(\mytilde{K}))\ge m(\mytilde{K})=3^n}{\scalebox{15}[1]{$\le$}}}\sum_{\mytilde{K}}\left(\frac{200C_{MS}}{3}\int_{B(x_0,8r)}\md\Gamma(f,f)+\frac{24}{5^n}\int_{B(x_0,8r)}|f|^2\md m\right)\\
&\overset{\#\{\mytilde{K}:n\text{-cell},\mytilde{K}\subseteq\calN(K)\}\le4}{\underset{5^{n-1}\le r^{d_w}<5^n}{\scalebox{15}[1]{$\le$}}}4\cdot\left(\frac{200C_{MS}}{3}\int_{B(x_0,8r)}\md\Gamma(f,f)+\frac{24}{r^{d_w}}\int_{B(x_0,8r)}|f|^2\md m\right)\\
&=\frac{800C_{MS}}{3}\int_{B(x_0,8r)}\md\Gamma(f,f)+\frac{96}{r^{d_w}}\int_{B(x_0,8r)}|f|^2\md m.
\end{align*}
\end{proof}

\bibliographystyle{plain}

\end{document}